\definecolor{MyLinkColor}{rgb}{0,0,0.4}
\definecolor{MyLinkColor}{rgb}{0,0,0.4}
\newcommand{\R}{{\mathbb R}}
\newcommand{\Z}{{\mathbb Z}}
\newcommand{\N}{{\mathbb N}}
\newcommand{\s}{\mathbb S}
\newcommand{\wt}{\widetilde}
\newcommand{\p}{\partial}
\newcommand{\e}{\varepsilon}
\newcommand{\om}{\omega}
\newcommand{\J}{ \mathcal J }
\newtheorem{thm}{Theorem}[section]
\newtheorem{lemma}[thm]{Lemma}
\theoremstyle{remark}
\patchcmd{\maketitle}{\@fnsymbol}{\@alph}{}{}  
\title[Non-uniform continuity of the flow map]{Non-uniform continuity of the flow map for an evolution equation modeling shallow water
waves of moderate amplitude}
\author[N. Duruk Mutluba\c s]{Nilay Duruk Mutluba\c s}
\address{Istanbul Kemerburgaz University, School of Arts and Sciences, Department of Basic Sciences, Mahmutbey Dilmenler Caddesi, No:26, 34217 Ba\u{g}c{\i}lar, Istanbul, Turkey. }
\email{nilay.duruk@kemerburgaz.edu.tr}
\author[A. Geyer]{Anna Geyer}
\address{Departament de Matem\`{a}tiques, Universitat Aut\`{o}noma de Barcelona, Facultat de Ci\`{e}ncies, 08193 Bellaterra, Barcelona, Spain.}
\email{annageyer@mat.uab.cat}
\author[B.--V. Matioc]{Bogdan--Vasile Matioc}
\address{Institut f{\"u}r Angewandte Mathematik, Leibniz Universit{\"a}t Hannover, Welfengarten~1, 30167 Hannover, Germany.}
\email{matioc@ifam.uni-hannover.de}
\date{\today}
\subjclass[2010]{35B30, 35G25, 35L05 }
\keywords{Camassa-Holm equation; flow map; non-uniform continuity;  water waves.}
\begin{document}

\maketitle

\begin{abstract}
 We prove that the flow map associated to a model equation for surface waves of moderate amplitude 
in shallow water is not uniformly continuous in the Sobolev space $H^s$ with $s>3/2$. 
The main idea is to  consider two suitable sequences of smooth initial data whose difference converges to zero in $H^s$, 
but such that neither of them is convergent.
Our main theorem shows that the   exact solutions corresponding to these sequences of data are uniformly bounded in $H^s$ on a uniform   existence interval, 
but the  difference of the two solution sequences is bounded away from zero in $H^s$  at any positive time in this interval.
The result is obtained by approximating the solutions corresponding to these initial data  by explicit formulae 
and   by  estimating the approximation error in suitable Sobolev norms.
\end{abstract}

\section{Introduction and the main result}

We consider a model equation for surface waves of moderate amplitude in shallow water 
\begin{equation}\label{CH}
 u_t+u_x+6uu_x-6u^2u_x+12 u^3u_x+u_{xxx}-u_{xxt} +14 u u_{xxx}+28u_xu_{xx}=0,
\end{equation}
 which arises as an approximation of the Euler equations in the context of homogenous,
 inviscid gravity water waves. 
 In   recent years, several nonlinear models have been proposed in order to  understand some important aspects of water waves, 
 like wave breaking or solitary waves. 
 One of the most prominent examples is the Camassa-Holm (CH) equation \cite{CH93},
 which is an integrable, infinite-dimensional Hamiltonian system \cite{MKST09, Cons01, CGI06}. 
 The relevance of the CH equation as a model for the propagation of shallow water waves was discussed by Johnson \cite{Joh02},
 where it is shown  that it describes the horizontal component of the velocity field at a certain depth within the fluid;
 see also \cite{Con11}. 
 Building upon the ideas presented in \cite{Joh02}, 
 Constantin and Lannes \cite{CL09}  have recently derived the evolution equation \eqref{CH} as a model for the motion of  the   free 
 surface of the wave, and they  evince  that \eqref{CH}  approximates the governing equations to the same order as 
 the CH equation. 
 Besides deriving \eqref{CH}, the authors of \cite{CL09} also establish  the local well-posedness results for the Cauchy problem associated to \eqref{CH}.
 Relying on a semigroup approach due to Kato \cite{K75}, Duruk \cite{ DM13} has shown that this feature  holds for a  larger class of initial data,
 as well as for solutions which are spatially periodic \cite{DMxx}.
The well-posedness in the context of Besov spaces together with  the regularity and the persistance properties  of  strong  solutions are studied in \cite{MiM13}.
 
Similarly    to  the CH equation, cf.  \cite{CE98, MK98}, the model equation \eqref{CH} can also 
capture the phenomenon of wave breaking: for certain initial data the solution
remains bounded, but its slope becomes unbounded  in finite time cf. \cite{CL09, DMxx}. 
 Unlike for the CH equation, which is known to posses global solutions, cf.   \cite{BC07, CE98},
 it is not 
 apparent how to control the solutions of \eqref{CH} globally, 
 due to the fact that this equation involves higher order nonlinearities in $u$ and its derivatives than the CH equation. 
 On the other hand, if one passes to a moving frame, 
 it can be shown that there exist  solitary travelling wave solutions decaying at infinity \cite{G12}. 
 Their orbital stability has been recently studied in \cite{DG13} using an approach proposed by Grillakis, Shatah and Strauss \cite{GSS87},  which takes
 advantage of the Hamiltonian structure of \eqref{CH}.

In the present paper, we consider the Cauchy problem associated to \eqref{CH} in the setting of periodic functions. 
From the local well-posedness results \cite{DMxx, DM13}, we know that   its solutions depend
continuously on their corresponding initial data   in Sobolev spaces $H^s$ with $s>3/2$. 
Our main result states  that this dependence is not uniformly continuous. 
  This    property was only recently shown to hold true  for the CH equation \cite{HK09, HKM10}, and 
was subsequently  confirmed also for the Euler equations \cite{HM10}  and for several related hyperbolic  problems such as 
the $\mu-b$ equation \cite{LPW13}, the hyperelastic rod equation \cite{Ka10}, for a modified CH system \cite{WL12}, and for the modified CH equation \cite{FL13}.
The main difficulty we encounter compared to all these references is that, as mentioned before, our equation has a higher degree of nonlinearity.
Nevertheless,   we were able two find two sequences of smooth initial data 
whose difference converges to zero in $H^s$, but such that none  of them is convergent, with the corresponding solutions of \eqref{CH} being uniformly 
bounded on a common (nonempty) interval  of existence.
Approximating these solutions by explicit formulae, we then 
successively estimate the error in suitable Sobolev norms and
use  well-known interpolation properties of the Sobolev spaces and commutator estimates to show that 
 at any time of the common existence interval the difference of the two sequences of exact solutions 
 is bounded from below in the $H^s$-norm by a positive constant.    
More precisely,  denoting by $u(\cdot;u_0),$  the unique solution of \eqref{CH}  corresponding  to the initial data $u_0\in H^s(\s)$ with $s>3/2,$ cf. Theorem \ref{T:1},   
our main result states:

\begin{thm}[Non-uniform continuity of the flow map]\label{MT}
 For $s>3/2$, the flow map 
 \[
 u_0\mapsto u(\cdot;u_0): H^s(\s)\to C([0,T), H^s(\s))\cap  C^1([0,T), H^ {s-1}(\s))
 \]
 associated to the evolution equation \eqref{CH} is continuous, but it is  not uniformly continuous.
 More precisely, there exist  two sequences of solutions
  \[(u_n)_{n }, (\wt u_n)_{n }\subset C([0,T_u], H^s(\s))\cap  C^1([0,T_u], H^ {s-1}(\s)),\]
   where $T_u>0$, and a positive constant $C>0$ with the following properties:
  \begin{align*}
   &\sup_{n\in\N}\max_{[0,T_u]}\|u_n(t)\|_{H^s}+\|\wt u_n(t)\|_{H^s}\leq C,\\
   &\lim_{n \to \infty} \|u_n(0)-\wt u_n(0)\|_{H^s}=0,
  \end{align*}
but
\begin{align*}
   &\liminf_{n\to\infty} \|u_n(t)-\wt u_n(t)\|_{H^s}\geq C^{-1}|\sin(t)| \qquad\text{for $t\in(0,T_u]$}.
  \end{align*}
\end{thm}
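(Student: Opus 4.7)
The plan follows the strategy of Himonas--Kenig for the CH equation: construct two explicit sequences of approximate solutions $u^{\om_1,n}$, $u^{\om_2,n}$ to \eqref{CH}, show that the exact solutions $u_n,\wt u_n$ launched from the same initial data remain close to them in $H^s$ on a uniform existence interval, and read off the upper and lower bounds from the explicit formulas.

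For the approximate solutions I take, for $\om\in\R$ and $n\in\N$ large,
\[
 u^{\om,n}(x,t):=\frac{\om}{n}+n^{-s}\cos\!\bigl(nx-\phi_n(\om)\,t\bigr),
\]
where $\phi_n(\om)$ is chosen so that the linearization of \eqref{CH} around the constant state $\om/n$ annihilates the fundamental mode. Substituting $u=\om/n+\e\, e^{i(nx-\phi_n t)}$ and cancelling the $O(\e)$ contribution yields
\[
 \phi_n(\om)=\frac{n(1+6\om n^{-1}-6\om^2 n^{-2}+12\om^3 n^{-3})-n^3(1+14\om n^{-1})}{1+n^2}=-n-14\om+O(n^{-1}),
\]
and I fix two nonzero values $\om_1\ne\om_2$ with $|\om_1-\om_2|=1/7$ (for instance $\om_1=1$, $\om_2=6/7$) so that $\tfrac12|\phi_n(\om_1)-\phi_n(\om_2)|\to 1$ as $n\to\infty$. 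Since $u^{\om,n}(0)$ is a smooth trigonometric polynomial uniformly bounded in $H^s(\s)$, Theorem~\ref{T:1} provides genuine $H^s$-solutions $u_n$, $\wt u_n$ of \eqref{CH} on a common existence interval $[0,T_u]$ with the required uniform $H^s$-bound; moreover $\|u_n(0)-\wt u_n(0)\|_{H^s}=|\om_1-\om_2|\,n^{-1}\,|\s|^{1/2}\to 0$.

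Substituting $u^{\om,n}$ into \eqref{CH} produces a residual $F^{\om,n}$ which, by the choice of $\phi_n$, carries no fundamental mode. Using the identity $14uu_{xxx}+28u_xu_{xx}=14\p_x(uu_{xx}+u_x^2/2)$ together with standard product-to-sum formulae one identifies its leading piece as $21n^{3-2s}\sin(2(nx-\phi_nt))$, so that
\[
 \|F^{\om,n}\|_{H^\sigma(\s)}\leq C_\sigma\, n^{\sigma+3-2s}\qquad\text{for every }\sigma\geq 0,
\]
which is $o(1)$ when $\sigma<2s-3$; this range is non-empty since $s>3/2$. The difference $v_n:=u_n-u^{\om_1,n}$ satisfies a quasi-linear transport-dispersive equation with $H^s$-bounded coefficients built from $u_n$ and $u^{\om_1,n}$ and source $-F^{\om_1,n}$; a careful $H^\sigma$-energy estimate with $\sigma\in(0,2s-3)$, in which the high-order nonlinearities $u^3u_x$, $uu_{xxx}$, $u_xu_{xx}$ are absorbed via Kato--Ponce commutator estimates and the uniform $H^s$-bound from Theorem~\ref{T:1}, followed by Gronwall, yields $\|v_n\|_{H^\sigma}\to 0$ uniformly on $[0,T_u]$. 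This is then boosted to convergence in $H^s$ by Sobolev interpolation against the higher-regularity upper bound $\|v_n\|_{H^{s+k}}\leq Cn^k$, which comes from persistence of regularity for $u_n$ and an explicit count for $u^{\om,n}$. Since the naive leading-order residual $n^{3-2s}$ is not small enough to make the interpolation exponent negative throughout $s>3/2$, this is handled by improving $u^{\om,n}$ to higher order in $n^{-1}$ - concretely, by iteratively adding corrections such as $\alpha\, n^{1-2s}\cos(2(nx-\phi_nt))$ designed to cancel the leading residual at each step, each of $H^s$-norm $O(n^{1-s})=o(1)$ - so that the successive $H^\sigma$-residual bounds tighten sufficiently. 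The same argument gives $\wt v_n:=\wt u_n-u^{\om_2,n}\to 0$ in $H^s$.

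Finally, the sum-to-product identity yields $\|u^{\om_1,n}(t)-u^{\om_2,n}(t)\|_{H^s}\to 2c_s|\sin(t)|$ for some $c_s>0$ as $n\to\infty$ at each $t\in[0,T_u]$, so the triangle inequality
\[
 \|u_n(t)-\wt u_n(t)\|_{H^s}\ge\|u^{\om_1,n}(t)-u^{\om_2,n}(t)\|_{H^s}-\|v_n(t)\|_{H^s}-\|\wt v_n(t)\|_{H^s}
\]
combined with $v_n,\wt v_n\to 0$ in $H^s$ delivers the claimed $\liminf$ bound, with $C$ absorbing $c_s$. The principal obstacle, and the reason the CH proof cannot be copied verbatim, is the combined residual-energy-interpolation step: the higher-degree nonlinearities $12u^3u_x$ and $14uu_{xxx}$ of \eqref{CH} demand significantly more delicate commutator manipulations and force the construction of the approximate solutions to be carried to sufficiently high order so that the interpolation exponent is strictly negative throughout the range $s>3/2$.
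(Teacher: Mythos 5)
Your overall architecture coincides with the paper's (and with Himonas--Kenig--Misio{\l}ek): a two-scale ansatz ``low-frequency constant plus $n^{-s}\cos(nx+\text{phase})$'' whose constant part tunes the transport speed of the high-frequency wave, uniform existence and $H^s$-bounds from Theorem~\ref{T:1}, a residual estimate, an energy estimate for $v_n=u_n-u^{\om,n}$ in a low norm, a crude bound in a high norm, interpolation, and the triangle inequality with the sum-to-product identity. The constructions are equivalent up to reparametrization: the paper's choice $u^{\om,n}=\bigl(\om n^{-1}-1-n^{-s}\cos(nx+\om t)\bigr)/14$ folds the $-1/14$ into the profile so that $1+14u^{\om,n}\approx\om n^{-1}$, which is the same tuning your $\phi_n(\om)=-n-14\om+O(n^{-1})$ achieves.

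The genuine gap is in your residual/correction step. You compute the residual of the \emph{original} equation \eqref{CH}, obtaining the rate $n^{\sigma+3-2s}$ driven by $14uu_{xxx}+28u_xu_{xx}$, correctly observe that this is too weak for the interpolation to close when $s\leq 3$, and propose to repair it by adding second-harmonic corrections of amplitude $\alpha n^{1-2s}$. This repair does not work. The dispersion relation of \eqref{CH} is $\phi(k)=(k-k^3)/(1+k^2)\sim -k$, i.e.\ asymptotically non-dispersive, so the second harmonic $\cos(2(nx-\phi_nt))$ is \emph{nearly resonant}: in the linearization of \eqref{CH} around $u^{\om,n}$ the $O(n^3)$ contributions of $w_{xxx}-w_{xxt}$ cancel, the $O(n^2)$ contributions cancel against $14(\om/n)w_{xxx}$, and the operator acting on the second harmonic has size only $O(n)$. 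Cancelling a residual of size $n^{3-2s}$ therefore forces an amplitude of order $n^{2-2s}$, whose $H^s$-norm is of order $n^{2-s}$ --- unbounded precisely in the range $3/2<s<2$, where it would destroy both the uniform $H^s$-bound and the lower bound \eqref{DE0}. (Your stated range $\sigma\in(0,2s-3)$ is also problematic near $s=3/2$, where it drops below the threshold $\sigma>1/2$ needed for the multiplier estimate \eqref{algebra} used to close the energy inequality.) The paper avoids all of this by rewriting \eqref{CH} in the nonlocal form \eqref{PB} \emph{before} computing the residual: the cubic-derivative terms then sit under $\p_x\Lambda^{-2}$, the residual drops to $O(n^{1+\sigma-2s})+O(n^{-s-1+\sigma})$ in $H^\sigma$ (Lemma~\ref{L:Error}), no higher-order corrections are needed, and interpolating the resulting low-norm bound $n^{-s}$ (Lemmas~\ref{L:2}--\ref{L:3}) against the $H^{s+2}$ bound $n^{2}$ (Lemma~\ref{L:1}) yields $\|v_n\|_{H^s}\leq Cn^{-2\sigma/(s+2-\sigma)}\to0$. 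Note also that the energy estimate itself must be run on the nonlocal form: in the form you write it, the terms $v_{xxt}$ and $14uv_{xxx}$ in the equation for $v_n$ cannot be absorbed by an $H^\sigma$-energy argument. You should adopt the reformulation \eqref{PB} and drop the correction scheme entirely.
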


\medskip
The  structure  of the paper is as follows:  In Theorem \ref{T:1} we recall some properties  concerning  the well-posedness of \eqref{CH} from \cite{DMxx} 
and determine a lower  bound on the existence time of the solution in $H^s$ in terms of the initial data.
Then, we introduce two sequences of   approximate  solutions $(u^{\omega,n})_n,$ $\om\in\{-1,1\},$ and compute    the  approximation error in Lemma \ref{L:Error}.
The corresponding solutions $u_{\om,n}$ of \eqref{CH} determined by the initial data $u^{\om,n}(0)$ are then shown to be uniformly bounded on a common interval of existence,
the absolute error  $\|u^{\om,n}-u_{\om,n}\| $ being computed in different Sobolev norms, cf. Lemmas \ref{L:1}-\ref{L:3}.
We end the paper with the proof of the main result.

\medskip

\paragraph{\bf Notation}

Throughout this paper, 
we shall denote by $C$ positive constants which may depend only upon $s$. 
 Furthermore,  $H^r:=H^r(\s)$, with $r\in\mathbb{R},$ is the $L_2-$based Sobolev space on the circle $\s:=\R/\Z$. 
Given $r\in\R$, we let  $\Lambda^r:=(1-\partial_x^2)^{r/2}$  denote  the Fourier multiplier with symbol $((1+|k|^2)^{r/2})_{k\in\Z}.$
It is well-known that $\Lambda^r:H^q(\s) \to H^{q-r}(\s) $ is an isometric isomorphism      for all $q,r\in\R$.
Furthermore,  the Banach space $W^m_\infty:=W^m_\infty(\s),$ $m\in\N,$ consisting of all
bounded functions which possess   bounded weak derivatives of order less  than  or equal to   $n$, is  endowed 
with the usual norm.
 
 \paragraph{\bf Some useful estimates} The following commutator estimates  play a crucial role in our analysis:
\begin{align}\label{est}
&\|[\Lambda^r, f]g\|_{L_2}\leq C_r\left(\| f_x\|_{L_\infty}\|\Lambda^{r-1}g\|_{L_2}+\|\Lambda^r f \|_{L_2}\|g\|_{L_\infty}\right)\qquad \text{for all $r>3/2$,}\\
\label{CCM}
    & \|[\Lambda^\sigma\p_x,  f] g\|_{L_2}\leq C\|f\|_{H^s}\|g\|_{H^\sigma} \qquad\text{for $s>3/2$ and $1+\sigma\in[0,s]$.}
    &
\end{align}
 They hold for all functions $f,g\in C^\infty(\s)$ and for the commutator $[S,T]:=ST-TS.$ 
 The Calderon-Coifman-Meyer  estimate \eqref{CCM} follows from Proposition 4.2 in Taylor \cite{Tay02}.
  The estimate \eqref{est} is due to Kato and Ponce \cite{KP88, Tay91}.
 Additionally, we shall     use the following multiplier inequality
 \begin{equation}\label{algebra}
 \|fg\|_{H^t}\leq C\|f\|_{H^t}\|g\|_{H^r} \qquad\text{for $t\leq r, \, r> 1/2$}
\end{equation}
 and $f\in H^t(\s)$,\, $g\in H^r(\s)$, cf. e.g. \cite{RS96}. 

 \section{The local well-posedness result}

Using the  above   notation, we observe that  the evolution problem associated to \eqref{CH} can be rendered as the following Cauchy problem:
\begin{equation}\label{PB}
 \left\{
 \begin{array}{rlll}
  u_t&=&u_x+14u u_x+\p_x\Lambda^{-2} R\qquad\text{for $t>0$,}\\
u(0)&=&u_0,
 \end{array}
 \right.
\end{equation}
where $R:=R(u)$ is defined as   
\begin{equation} \label{R}
     R(u):=   7u_x^2 -3 u^4+2u^3-10 u ^2-2u.
\end{equation}

  Relying   upon the local well-posedness results established in \cite{DMxx} for the quasilinear Cauchy problem \eqref{PB},
we determine in the following theorem a lower bound  for the maximal existence time of the solutions in terms of Sobolev norms of  the initial data.
Additionally, we  obtain a  bound on the $H^s$-norm of the local strong solutions on this  particular existence interval.

\begin{thm}\label{T:1}
Let $s>3/2$ be given. Then, we have:
\begin{itemize}
 \item[$(i)$] The problem \eqref{PB} possesses for each $u_0\in H^s(\s)$ a unique maximal solution
 \[u(\cdot;u_0)\in C([0,T), H^s(\s))\cap  C^1([0,T), H^ {s-1}(\s)),\]
whereby $T=T(u_0).$
 Moreover, the flow map
 \[
 u_0\mapsto u(\cdot;u_0): H^s(\s)\to C([0,T), H^s(\s))\cap  C^1([0,T), H^ {s-1}(\s))
 \]
 is continuous. 
 \item [$(ii)$] Given $u_0\in H^s(\s),$ the maximal existence time of the solution $u(\cdot;u_0)$ of
 \eqref{PB} satisfies 
 \begin{equation}\label{Est1}
  T>T_0:=\frac{\| u_0\|_{H^1}^5}{2C \big(1+\| u_0\|_{H^1}^5\big)\|u_0 \|_{H ^s}^{3}}
 \end{equation}
where $C$ is a positive constant.
\item [$(iii)$] We have
 \begin{equation}\label{Est2}
 \|u(t)\|_{H^s}\leq 2\|u_0\|_{H^s}  \qquad \text{for all $t\in[0,T_0].$}
 \end{equation}
\end{itemize}
\end{thm}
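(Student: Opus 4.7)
Part (i) is a restatement of the local well-posedness result of \cite{DMxx}, so nothing new is needed there. The substance of the theorem lies in the quantitative bounds (ii) and (iii), which I would establish together by an a priori $H^s$-energy estimate combined with a bootstrap/continuation argument based on (i).

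For the energy estimate, I would apply $\Lambda^s$ to both sides of \eqref{PB} and pair with $\Lambda^s u$ in $L_2$. The term from $u_x$ vanishes by antisymmetry. The quasilinear transport term $14 u u_x$ is decomposed as $u \Lambda^s u_x + [\Lambda^s, u] u_x$: the first piece reduces after integration by parts to $-\tfrac12 \int u_x (\Lambda^s u)^2 \, dx$, bounded by $\|u_x\|_{L_\infty} \|u\|_{H^s}^2$; the commutator is handled by the Kato--Ponce estimate \eqref{est}. Using $\|u_x\|_{L_\infty} \leq C \|u\|_{H^s}$ (from the embedding $H^{s-1}(\s) \hookrightarrow L_\infty(\s)$), this contribution is $O(\|u\|_{H^s}^3)$. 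The nonlocal term is controlled by Cauchy--Schwarz as $\|u\|_{H^s}\|R\|_{H^{s-1}}$, and each monomial in $R = 7 u_x^2 - 3u^4 + 2u^3 - 10u^2 - 2u$ is estimated using the multiplier inequality \eqref{algebra} iteratively, with $\|u^k\|_{H^{s-1}} \leq C \|u\|_{L_\infty}^{k-1} \|u\|_{H^{s-1}}$ obtained by induction.

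The crucial additional ingredient producing the $\|u_0\|_{H^1}$-dependence in $T_0$ is the $H^1$-conservation law $\|u(t)\|_{H^1} = \|u_0\|_{H^1}$ for smooth solutions. This can be verified directly by multiplying the original equation \eqref{CH} by $u$, integrating over $\s$, and observing that all polynomial and mixed-derivative contributions cancel upon integration by parts (a manifestation of the Hamiltonian structure mentioned in the introduction). Combined with the Sobolev embedding $H^1(\s) \hookrightarrow L_\infty(\s)$, this allows $\|u(t)\|_{L_\infty}$ to be replaced throughout the estimates by the time-independent quantity $C\|u_0\|_{H^1}$. Assembling everything, and absorbing lower powers of $\|u\|_{H^s}$ using $\|u(t)\|_{H^s} \geq \|u(t)\|_{H^1} = \|u_0\|_{H^1}$, produces a differential inequality of the form $\tfrac{d}{dt} \|u\|_{H^s}^2 \leq C\, g(\|u_0\|_{H^1}) \|u\|_{H^s}^5$ with $g$ polynomial in its argument. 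Integrating this inequality under the bootstrap assumption $\|u(t)\|_{H^s} \leq 2 \|u_0\|_{H^s}$ then yields simultaneously (iii) and the explicit lower bound $T_0$ of (ii) after algebraic rearrangement; continuation up to $[0, T_0]$ follows from (i) via the standard blow-up criterion.

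The principal obstacle is the higher polynomial degree of the nonlinearity compared to Camassa--Holm, in particular the quartic term $-3u^4$ in $R$, which already forces a quintic differential inequality in $\|u\|_{H^s}$ and makes the $H^1$-conservation indispensable for controlling the $\|u\|_{L_\infty}$-factors arising from the cubic, quadratic and linear monomials. A minor technical subtlety is that the energy estimate is formal at regularity $H^s$; its rigorous justification proceeds either via Friedrichs mollification of \eqref{PB}, or by applying it to the smoother approximating sequence already constructed in \cite{DMxx} and passing to the limit by continuous dependence on the data.
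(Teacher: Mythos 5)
Your proposal is correct and follows essentially the same route as the paper: a mollified $\Lambda^s$-energy estimate with the Kato--Ponce commutator bound for the transport term, Cauchy--Schwarz plus the multiplier/algebra property for the nonlocal term $\p_x\Lambda^{-2}R$, the conserved $H^1$-norm used via $\|u_0\|_{H^1}\le\|u(t)\|_{H^s}$ to turn the estimate into a purely quintic differential inequality for $\|u\|_{H^s}^2$, and integration of that Riccati-type inequality to read off $T_0$ and the bound in (iii). The only cosmetic differences are that the paper integrates the differential inequality explicitly rather than running a bootstrap, and cites Taylor for the transport-term bound instead of rederiving it from the commutator decomposition.
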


Before proceeding with the proof, one can show by using integration by parts shows \cite{CL09, DMxx}  that     the $H^1$-norm of
the solutions of \eqref{CH} is preserved in time when $s\geq 2 $.
Based upon this observation and relying on Theorem \ref{T:1} $(i)$, we then find that  
\begin{align*}
\|u(t)\|_{H^1} =\|u_0\|_{H^1}\qquad \text{for all $u_0\in H^s(\s),\, s>3/2,$ and $0\leq t<T.$}
 \end{align*}

\begin{proof}[Proof of Theorem \ref{T:1}]
The assertion (i) follows from the local well-posedness results established  in \cite{DMxx}.
For $(ii)$, we first pick $ u_0\in H^{s}(\s)$ with $u_0\neq 0$ and denote by $ T$ the maximal existence time of the associated solution $u=u(\cdot;u_0)$. 
In order to determine a lower  bound for  $T$, we first show that  $\| u\|_{H^s}^2$ satisfies a  differential inequality.
We proceed as in  \cite{Mi02, Tay91} and pick a Friedrichs mollifier \footnote
{Choosing $\rho\in C^\infty_0(\R)$ with $\rm{supp}\,  \rho\subset(-1/2,1/2)$ 
 and setting $\rho_\e(x):=\e^{-1}\rho(x/\e)$ for $\e\in(0,1)$ and $x\in\R$, the 
 mollifier $\J_\e$ is defined by  $\J_\e u:=\rho_\e* u$ for all $u\in L_2(\s)$. 
 For every $\e\in(0,1) $ and $s>0$, we have that $\J_\e: L_2(\s)\to H ^k(\s)$ is continuous,
 $\|\J_\e u-u\|_{H^s}\to_{\e\to0}0$ for all $u\in H^s(\s),$ and  $\J_\e:L_2(\s)\to L_2(\s)$ is a contraction. 
 Being a Fourier multiplier, $\J_\e$ commutes with $\p_t$, $\p_x$, and $\Lambda ^s,$ $s>0,$ cf. e.g \cite{BExx}.} $\J_\e\in \mbox{\it OPS}^{-\infty}$, $\e\in(0,1).$ 
 Since $\J_\e$ is itself a Fourier multiplier, the time evolution of the $H^s$-norm of $\J_\e u$ is given by
 \begin{align*}
  \frac{1}{2}\frac{d}{dt}\|\J_\e u\|_{H^s}^2=&\frac{1}{2}\frac{d}{dt}\|\Lambda^s \J_\e u\|_{L_2}^2=\int_{\s} \Lambda ^s\J_\e u\Lambda ^s\J_\e u_t\, dx=I_1+I_2,
 \end{align*}
 where 
\[
   I_1:=\int_{\s} \Lambda ^s \J_\e u\Lambda ^s \J_\e (uu_x)\, dx,
  \qquad I_2:=\int_{\s} \Lambda ^s \J_\e u\Lambda ^s \J_\e (\p_x\Lambda^{-2} R)\, dx.
\]
The latter equality is based on the observation that
\[
 \int_{\s} \Lambda ^s \J_\e u\Lambda ^s \J_\e u_x\, dx=\int_{\s} \Lambda ^s \J_\e u\p_x(\Lambda ^s\J_\e u)\, dx=0.
\]
To estimate the first term, we use the following bound which was derived in Taylor \cite{Tay91}, by means of the  Kato-Ponce estimate \eqref{est}:
\begin{align*}
|I_1|\leq& C\| u\|_{W^1_\infty}\| u\|_{H^s}^2.
\end{align*}
Employing the Cauchy-Schwartz inequality and the algebra property of $H^r(\s)$, $r>1/2,$ the term $I_2$ can be estimated as follows:
\begin{align*}
 |I_2|\leq\|\J_\e \Lambda ^s u\|_{L_2}\|\J_\e\Lambda ^s (\p_x\Lambda^{-2} R)\|_{L_2}\leq C\|u\|_{H^s}\|R\|_{H^{s-1}}\leq C (1+\| u\|_{H^s}^5). 
\end{align*}
 Finally, we combine these estimates and let $\e$  tend to  $0$ to find that
\begin{equation}\label{Est3}
 \frac{d}{dt}\| u\|_{H^s}^2\leq C (1+\| u\|_{H^s}^5)\qquad\text{for all $t\in[0,T)$.}
\end{equation}
Recalling that  the $H^1$-norm of $u$ is preserved in time, we get $\|u_0\|_{H^1}\leq \|u(t)\|_{H^s}$ for all $t\in[0,T),$ and together with 
\eqref{Est3} we find that 
\begin{equation}\label{Est4}
 \frac{d}{dt}\| u\|_{H^s}^2\leq C \frac{1+\| u_0\|_{H^1}^5}{\| u_0\|_{H^1}^5}\| u\|_{H^s}^5\qquad\text{for all $t\in[0,T)$.}
\end{equation}
We conclude that  
\[\displaystyle \|u(t)\|_{H ^s} 
\leq  \frac{\|u_0 \|_{H ^s} }{\Big( 1-C \frac{1+\| u_0\|_{H^1}^5}{\| u_0\|_{H^1}^5}\|u_0 \|_{H ^s}^{3}t\Big)^{1/3}}\quad
\text{for all $t< \max\Big\{ \displaystyle \frac{\| u_0\|_{H^1}^5}{C \big(1+\| u_0\|_{H^1}^5\big)\|u_0 \|_{H ^s}^{3}}, T\Big\}. $}
\]
It follows that    the constant  $T_0$ defined by the relation \eqref{Est1} is a lower bound for $T$, and that $\|u(t)\|_{H ^s}\leq 2\|u_0\|_{H ^s} $ for all $t\leq T_0.$ 
This proves the claim.
\end{proof}


\section{Approximate solutions for the evolution equation} \label{Sect:Approx}

In the following we consider approximate solutions of the evolution equation \eqref{CH} of the form
\begin{equation} \label{approx}
    u^{\omega,n}(t,x):=\frac {\omega n ^{-1}-1-n^{-s}\cos(  n x+\om t  )}{14},
 \end{equation}
where $ \omega\in\{-1,1\}$ and  $n\in \N\setminus\{0\}$. 
When $n$ is very large, the term involving the cosine has a high spatial frequency whereas the other term is constant. 
Before we estimate the error of these approximate solutions, observe that 
\begin{equation}\label{E}
  \|\sin(n x-\alpha)\|_{H^\sigma}=\|\cos (n x-\alpha)\|_{H^\sigma}=\sqrt{\pi} (1+n^2)^{\sigma/2}
\end{equation}
for all $\alpha, \sigma\in\R$ and $n\in \N\setminus\{0\}$. 
Indeed, the functions $\phi_n:=e^{in\cdot}/\sqrt{2\pi}, n\in\Z,$ form an orthonormal basis of $L_2(\s)$, and therefore a 
direct computation  (see also \cite[Lemma 1]{HKM10}) shows that
\begin{align*}
     \|\cos(nx-\alpha)\|_{H^\sigma}^2
     = & \frac{(1+n^2)^\sigma}{2\pi} \Big(\Big|\int_0^{2\pi}\cos(nx-\alpha)e^{-inx}\, dx\Big|^2
         + \Big|\int_0^{2\pi}\cos(nx-\alpha)e^{inx}\, dx\Big|^2\Big)\\
     =& \frac{(1+n^2)^\sigma}{2\pi}\left(\left| \pi e^{-i\alpha}\right|^2
         + \left|\pi e^{i\alpha}\right|^2\right)=\pi(1+n^2)^\sigma.
\end{align*}
We emphasize that in contrast to \cite{HKM10}, due to additional terms appearing in \eqref{CH} the precise computation  of 
\eqref{E} is very important   when estimating the norm of $u^{\om,n}$ in $H^s(\s).$
In view of \eqref{E} and noting that $\|1\|_{H^{\sigma}}=\sqrt{2\pi}$, we obtain the bound
\begin{equation} \label{approxEST}
      \|u^{\omega,n}\|_{H^\sigma} \leq C (1+n^{\sigma-s})\qquad \text{for all $\sigma>0$,  $\omega\in\{-1,1\}$, and  $n\in \N\setminus\{0\}$. }
 \end{equation}
Substituting the approximate solution $u^{\omega,n}$ into the  equation \eqref{CH}   the following expression for the error is found: 
\begin{equation}\label{Error}
 E:=u_t^{\omega,n}-u_x^{\omega,n}-14u^{\omega,n} u_x^{\omega,n}-\p_x\Lambda^{-2} R(u^{\omega,n})
\end{equation}

\begin{lemma}[Estimating the error of approximate solutions]\label{L:Error}
Given $s>3/2$, there is a positive constant $C$ such that 
 \begin{equation}\label{E:1}
  \|E\|_{H^\sigma} \leq C
  \left\{
  \begin{array}{llll}
   {n}^{-2s+1+\sigma}, &\text{if $3/2<s<2,$}\\[1ex]
   {n}^{-s-1+\sigma}, &\text{if $s\geq 2,$}
  \end{array}
  \right.
 \end{equation}
for all $1/2<\sigma\leq1$, $ \omega\in\{-1,1\}$, and $n\in \N\setminus\{0\}$.
\end{lemma}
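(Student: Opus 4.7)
The plan is to substitute the ansatz $v:=u^{\om,n}$ directly into \eqref{PB} and decompose
\[
E = E_1 - \p_x\Lambda^{-2}R(v), \qquad E_1 := v_t - v_x - 14\, v v_x,
\]
treating the two pieces separately. Writing $v = A + B\cos\theta$ with $\theta := nx + \om t$, $A := (\om n^{-1}-1)/14$ and $B := -n^{-s}/14$, a short computation gives $v_t = -B\om\sin\theta$ and $v_x = -Bn\sin\theta$, and a direct expansion shows that the coefficient $Bn\bigl[(1-\om/n)+14A\bigr]$ of the $\sin\theta$ term in $E_1$ vanishes exactly by the choice of $A$. What survives is only the bilinear interaction $14 v v_x$ contributing $7B^2 n\sin(2\theta)$, that is,
\[
E_1 = \frac{n^{1-2s}}{28}\sin(2\theta),
\]
and \eqref{E} at frequency $2n$ immediately yields $\|E_1\|_{H^\sigma} \leq C\, n^{1-2s+\sigma}$.

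For the second piece I would exploit two structural properties of the Fourier multiplier $\p_x\Lambda^{-2}$: it annihilates constants, and its symbol $ik/(1+k^2)$ is bounded by $1/|k|$ on nonzero modes, so it supplies one extra factor of $1/n$ when acting on an oscillation of spatial frequency $\sim n$. Splitting $R(v) = 7 v_x^2 + P(v)$ with $P(v) := -3v^4 + 2v^3 - 10v^2 - 2v$, and using $v_x^2 = \tfrac{1}{2}B^2 n^2\bigl(1 - \cos(2\theta)\bigr)$, the constant part of $v_x^2$ is killed and only the oscillating part survives, so
\[
\|\p_x\Lambda^{-2}(7v_x^2)\|_{H^\sigma} \leq C\, n^{2-2s}\cdot n^{-1}\cdot n^\sigma = C\, n^{1-2s+\sigma}.
\]
For $P(v)$, a Taylor expansion around $v = A$ gives $P(v) - P(A) = \sum_{k=1}^4 p_k B^k \cos^k\theta$ with $p_k := P^{(k)}(A)/k!$; each $\cos^k\theta$ is a trigonometric polynomial whose frequencies lie in $\{0,1,\dots,k\}$, so after applying $\p_x\Lambda^{-2}$, which annihilates the constant modes and supplies a factor $\leq 1/n$ on the rest, the dominant contribution is the $k=1$ term. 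Since $|B| = n^{-s}/14$ and $|p_1| = |P'(A)| = O(1)$, this yields $\|\p_x\Lambda^{-2}P(v)\|_{H^\sigma} \leq C\, n^{-s}\cdot n^{-1}\cdot n^\sigma = C\, n^{-s-1+\sigma}$, and the terms with $k\geq 2$ produce strictly smaller powers of $n$.

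Collecting the estimates gives $\|E\|_{H^\sigma} \leq C(n^{1-2s+\sigma} + n^{-s-1+\sigma})$, and the comparison $1 - 2s \geq -s-1 \iff s \leq 2$ reproduces the two-case dichotomy \eqref{E:1}. The most delicate point I anticipate is the $v_x^2$ estimate: its raw amplitude $B^2 n^2 \sim n^{2-2s}$ is a full factor of $n$ larger than the claimed bound when $3/2 < s < 2$, and this excess is recovered only through the combined effect of the constant-killing property of $\p_x\Lambda^{-2}$ together with its $1/n$ gain on the $2n$-frequency mode. Missing either of these refinements would degrade the error estimate precisely in the regime most relevant for the application to Theorem \ref{MT}.
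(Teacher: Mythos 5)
Your proof is correct, and it follows the same overall skeleton as the paper: the same splitting of $E$ into the local part $E_1=u^{\om,n}_t-u^{\om,n}_x-14u^{\om,n}u^{\om,n}_x$ and the nonlocal part $\p_x\Lambda^{-2}R(u^{\om,n})$, the same exact cancellation of the $\sin\theta$ mode leaving $E_1=\tfrac{n^{1-2s}}{28}\sin(2\theta)$, and the same identification of $n^{1-2s+\sigma}$ versus $n^{-s-1+\sigma}$ as the source of the dichotomy in \eqref{E:1}. Where you diverge is in the treatment of the nonlocal term: the paper writes it as $\Lambda^{-2}\p_x R$, pushes $\p_x$ through $R$, uses the isometry $\Lambda^{-2}:H^{\sigma-2}\to H^{\sigma}$, and then bounds the polynomial terms $(u^{\om,n})^ju^{\om,n}_x$ via the multiplier inequality \eqref{algebra} in the negative-order space $H^{\sigma-2}$ together with the uniform $H^1$ bound \eqref{approxEST}; you instead Taylor-expand $P(v)$ about the constant $A$ and compute everything mode by mode, using that the symbol of $\p_x\Lambda^{-2}$ kills constants and is $O(1/n)$ on frequencies that are nonzero multiples of $n$. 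Your route is more elementary and self-contained for this explicit ansatz (no product estimates in spaces of negative order are needed), while the paper's is shorter and avoids expanding the quartic. Two small points of precision: the frequencies of $\cos^k\theta$ lie in $\{0,n,2n,\dots,kn\}$ rather than $\{0,1,\dots,k\}$, and the uniformity of the $1/(jn)\cdot(jn)^{\sigma}$ bound over $j\ge 1$ uses $\sigma\le 1$ — both harmless here, but worth stating.
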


\begin{proof}
Observe that $E=E_1-E_2$, where
 \begin{align*}
 E_1:=&u_t^{\omega,n}-u_x^{\omega,n}-14u^{\omega,n} u_x^{\omega,n}
         = \frac{n^{-2s+1}}{28}\sin(2(nx+\omega t)),\\
 E_2:=&\Lambda^{-2}\big(14u_x^{\omega,n}u_{xx}^{\omega,n}
     -12(u^{\omega,n})^3u_x^{\omega,n}
     +6(u^{\omega,n})^2u^{\omega,n}_x-20 u^{\omega,n} u_x-2u^{\omega,n}_x\big).
\end{align*}
Recalling \eqref{E} and the fact that $\| u^{\omega,n}\|_{H^{1}}\leq C$ for all $n\geq 1 $, $\omega\in\{-1,1\}$, cf. \eqref{approxEST}, we obtain that 
\begin{align}
  \|E_1\|_{H^\sigma}\leq & C n^{-2s+1+\sigma} \label{E1}\\
 \|E_2\|_{H^\sigma} 
     \leq & C\big( \| u_x^{\om,n}u_{xx}^{\om,n} \|_{H^{\sigma-2}}
         + \| (u^{\om,n})^3u^{\om,n}_x\|_{H^{\sigma-2}}   
         + \|(u^{\om,n})^2u^{\om,n}_x\|_{H^{\sigma-2}}  \nonumber\\
         & \quad + \| u^{\om,n}u^{\om,n}_x\|_{H^{\sigma-2}}  
         + \| u_x^{\om,n} \|_{H^{\sigma-2}}\big) \nonumber\\
     \leq & C\big[ {n}^{-2s+3}\|\sin(2(nx+\omega t))\|_{H^{\sigma-2}}\nonumber\\
     &\quad + \left(\| u^{\om,n} \|_{H^{1}}^3   
         + \|u^{\om,n}\|_{H^{1}}^2
         + \| u^{\om,n}\|_{H^{1}}+1\right) \| u_x^{\om,n} \|_{H^{\sigma-2}} \big] \nonumber\\
     \leq & C( {n}^{-2s+1+\sigma}+{n}^{-s-1+\sigma})\label{E2},
\end{align}
 where we have employed the multiplier inequality \eqref{algebra}.
Combining \eqref{E1} and \eqref{E2} we obtain the desired conclusion.
\end{proof}

\section{Error estimates}

 For each $ \omega\in\{-1,1\}$ and  $n\in \N\setminus\{0\}$, we consider the solution $u_{\om,n}$ of equation \eqref{CH} whose initial
 data is given by the approximate solution  $u^{ \om,n}$ evaluated at time zero, i.e. $u_{\om,n}$ satisfies the equations 
\begin{equation}\label{CP}
 \left\{
     \begin{array}{rlll}
          \p_t u_{\om,n} &=&u_{\om,n} \p_x u_{\om,n}+14u_{\om,n} \p_x u_{\om,n}
            +\p_x\Lambda^{-2} R(u_{\om,n})\qquad\text{$t>0$,}\\
         u_{\om,n}(0) &=&u^{ \om,n}(0).
     \end{array}
 \right.
\end{equation}
 Note that $u_{\om,n}(0)$ is bounded in $H^s(\s)$ for any $s \in \R$. 
 Indeed, since  $u_{\om,n}(0) =u^{ \om,n}(0)$  and recalling the definition \eqref{approx} we find that
\[
    \left|\|u_{\om,n}(0)\|_{H^s}-\frac{\|1\|_{H^s}}{14}\right| 
    \leq\frac{\|\omega n^{-1}\|_{H^s}+n^{-s}\|\cos(nx)\|_{H^s}}{14}
 \]
which  yields
\begin{align*}
\frac{\sqrt{\pi}}{28} \leq \liminf_{n\to\infty}\|u_{\om,n}(0)\|_{H^s} 
        \leq\limsup_{n\to\infty}\|u_{\om,n}(0)\|_{H^s}\leq \frac{\sqrt{2\pi}}{7} 
\end{align*}
for  $\omega\in\{-1,1\}.$
 Furthermore, we obtain  in view of \eqref{E} that 
\begin{align*}
 \lim_{n\to\infty}\|u_{\om,n}(0)\|_{H^1} = \frac{\sqrt{2\pi}}{14}
\end{align*}
for  $\omega\in \{-1,1\}$.
Therefore, if $s>3/2$ we may infer from the Theorem \ref{T:1}  that there exists an integer $n_0\geq1$ and positive constants $C$ and $T_u \leq T_{0}(u_{\om,n}(0))$, such that  
\begin{equation}\label{UE}
 \|u_{\om,n}(t)\|_{H^s}\leq C
\end{equation}
for all $t\in[0,T_u]$, $n\geq n_0$ and $\omega\in \{-1,1\}$.
In the following lemma, we find that the exact
solutions $u_{\om,n}$ have very nice regularity properties,
which allow us to estimate the difference to the approximate solution $u^{\om,n}$ as follows:

\begin{lemma}[Estimating the error $\|u^{\om,n}-u_{\om,n}\|_{H^k}$]\label{L:1} 
Define $k:=s+2$. Then, for each $\omega\in \{-1,1\}$ and  $n\geq n_0$ we have that 
 \[u_{\om, n}\in C([0,T_u], H^{k+1}(\s))\cap C^1([0,T_u], H^k(\s)).\]
 Moreover, there is a constant $C(T_u)>0$ such that
 \begin{equation}\label{DE_2}
\max_{t\in[0,T_u]}\|u^{\om,n}(t)-u_{\om,n}(t)\|_{H^k}\leq C(T_u)n^{2}\qquad\text{for all $\omega\in \{-1,1\}$, $n\geq n_0.$}
 \end{equation}
\end{lemma}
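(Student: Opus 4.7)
The plan is to establish the regularity $u_{\om,n}\in C([0,T_u], H^{k+1}(\s))\cap C^1([0,T_u], H^{k}(\s))$ via a persistence-of-regularity argument, and then to deduce the error bound \eqref{DE_2} by a direct triangle inequality rather than by estimating the difference $u^{\om,n}-u_{\om,n}$ from its own evolution equation. The point is that $\|u^{\om,n}\|_{H^k}$ already grows like $n^{2}$, so the stated bound $Cn^{2}$ is at the level of the individual norms and no finer information on $v:=u^{\om,n}-u_{\om,n}$ is needed here.

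For the regularity step, I would first observe that the approximate profile $u^{\om,n}(0)$ given by \eqref{approx} is a trigonometric polynomial and therefore lies in every Sobolev space, in particular in $H^{k+1}(\s)$. Theorem \ref{T:1}(i) applied at the regularity level $k+1>3/2$ supplies a unique maximal $H^{k+1}$-solution, which by uniqueness in $H^s$ coincides with $u_{\om,n}$ on the common interval of existence. To force this interval to contain $[0,T_u]$, I would mimic the energy estimate in the proof of Theorem \ref{T:1}(ii) but at the level $H^{k+1}$: after Friedrichs mollification, apply $\Lambda^{k+1}$ to \eqref{PB} and test against $\Lambda^{k+1}u_{\om,n}$. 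The transport contribution is handled by the Kato--Ponce commutator estimate \eqref{est}, while the remaining term is bounded by $\|u_{\om,n}\|_{H^{k+1}}\|R(u_{\om,n})\|_{H^{k}}$. Since $R$ is polynomial in $u$ and $u_x$, Moser-type product estimates dominate $\|R(u_{\om,n})\|_{H^k}$ by $C(1+\|u_{\om,n}\|_{H^s})^{3}\|u_{\om,n}\|_{H^{k+1}}$, and the Sobolev embedding $H^s\hookrightarrow W^1_\infty$ replaces $\|u_{\om,n}\|_{W^1_\infty}$ in the commutator bound by $C\|u_{\om,n}\|_{H^s}$. Combined with the uniform bound \eqref{UE}, this yields a differential inequality of the form $\frac{d}{dt}\|u_{\om,n}\|_{H^{k+1}}^{2}\leq C(T_u)\|u_{\om,n}\|_{H^{k+1}}^{2}$ on $[0,T_u]$, and Grönwall's inequality then prevents blow-up and extends the solution in $H^{k+1}$ to all of $[0,T_u]$. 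The $C^{1}([0,T_u],H^{k}(\s))$ regularity is read off directly from \eqref{PB}.

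Running exactly the same energy estimate at the lower level $H^k$ (which still satisfies $k>3/2$) and using $\|u_{\om,n}(0)\|_{H^k}=\|u^{\om,n}(0)\|_{H^k}\leq C(1+n^{k-s})\leq Cn^{2}$, obtained from \eqref{approxEST} with $\sigma=k=s+2$, Grönwall delivers $\|u_{\om,n}(t)\|_{H^k}\leq C(T_u)\,n^{2}$ uniformly on $[0,T_u]$. Since \eqref{approxEST} likewise gives $\|u^{\om,n}(t)\|_{H^k}\leq C(1+n^{2})$, the triangle inequality $\|u^{\om,n}-u_{\om,n}\|_{H^k}\leq \|u^{\om,n}\|_{H^k}+\|u_{\om,n}\|_{H^k}$ immediately closes the estimate and yields \eqref{DE_2}.

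The main technical obstacle is the persistence step. The energy estimate must be carried out at the high regularity level $H^{k+1}=H^{s+3}$, yet the Grönwall coefficient has to depend only on $\|u_{\om,n}\|_{H^s}$ rather than on $\|u_{\om,n}\|_{H^{k+1}}$; otherwise one would only obtain a short-time existence interval whose length shrinks as $n\to\infty$, incompatible with the uniform $T_u$. This demands careful bookkeeping of the quintic nonlinearity $R$ through Moser-type product estimates, so that every occurrence of a Sobolev norm at level $k$ or $k+1$ appears linearly and all higher powers are absorbed into a factor controlled by $\|u_{\om,n}\|_{H^s}$, which is bounded by \eqref{UE}. Once this is set up, the rest of the argument is essentially algebraic.
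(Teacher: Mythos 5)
Your proposal is correct and follows essentially the same route as the paper: persistence of regularity is obtained from a tame energy estimate whose Gr\"onwall coefficient is controlled by the uniform $H^s$ bound \eqref{UE}, after which \eqref{DE_2} follows from $\|u^{\om,n}(0)\|_{H^k}\leq Cn^{k-s}=Cn^{2}$ together with the triangle inequality. The only difference is one of detail: the paper runs the energy estimate at level $H^{k}$ and tames the resulting $\|u\|_{W^2_\infty}\|u\|_{H^{k-1}}$ factor via the interpolation $\|u\|_{H^{k-1}}^{2}\leq\|u\|_{H^{s}}\|u\|_{H^{k}}$ and the embedding $H^{k-1}\hookrightarrow C^{2}$, whereas you achieve the same linearity in the top norm by applying Moser-type product estimates to $R$ directly.
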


\begin{proof}
Let $\omega\in \{-1,1\}$ and let $n\geq n_0$ be arbitrary. For simplicity we set $u:=u_{\om, n}$.
 Because $u(0)$ is smooth, we know in view of Theorem \ref{T:1} that the Cauchy problem \eqref{CP} has a unique maximal solution  
\[
     u\in C([0,T), H^{k+1}(\s))\cap C^1([0,T), H^k(\s))
 \]
 with existence time  $T:=T(\om,n)$. 
 In order to derive a bound on the absolute error in $H^k$ we have to prove first that this 
 additional regularity holds up to and including the time $T_u$. 
 That is, we have to show that $T >T_u$ for all $\omega\in \{-1,1\}$ and  $n\geq n_0.$ 
 To this end we proceed as in the proof of Theorem \ref{T:1} and compute that
 \begin{align*}
   \frac{d}{dt}\| u\|_{H^k}^2\leq C(\|u\|_{W^1_\infty}\|u\|_{H^k}^2+\|u\|_{H^k}\|\Lambda ^{k-2}\p_x R(u)\|_{L_2})\qquad
   \text{in $[0,T).$}
 \end{align*}
We study the last term more carefully and  obtain in view of  the commutator estimate  \eqref{est} that
 \begin{align*}
      \|\Lambda ^{k-2}\p_x (u_x^2)\|_{L_2}
          \leq &2\|[\Lambda ^{k-2},u_x]u_{xx}\|_{L_2}+2\| u_x\Lambda ^{k-2}u_{xx}\|_{L_2}\\
          \leq &C(\|u\|_{  W^2_\infty }\|u\|_{H^{k-1}}+\|u\|_{  W^1_\infty }\|u\|_{H^{k}}),
 \end{align*}
whereas
 \begin{align*}
     \|\Lambda ^{k-2}\p_x u\|_{L_2}\leq &C\|u\|_{H^{k-1}},
 \end{align*}
 and
 \begin{align*}
   \|\Lambda ^{k-2}\p_x (u^p)\|_{L_2}
       \leq &p\|[\Lambda ^{k-2},u^{p-1}]u_{x}\|_{L_2} 
              +p\| u^{p-1}\Lambda ^{k-2}u_{x}\|_{L_2}\\
        \leq & C(\|u\|_{   W^1_\infty  }^{p-1}\|u\|_{H^{s}}
                   +\|u\|_{   W^1_\infty  }\|u\|_{H^{s}}^{p-1}
                   +\|u\|_{ L_\infty}^{p-1}\|u\|_{H^{k-1}})
 \end{align*}
for $2\leq p\leq 4$. 
In view of the embedding   $ H^{s}(\s)\hookrightarrow{C^1}(\s)$   and \eqref{UE} we find that 
\begin{align*}
   \frac{d}{dt}\| u\|_{H^k}^2\leq C(\|u\|_{H^k}^2+\|u\|_{W^2_\infty}\|u\|_{H^{k-1}} \|u\|_{H^k} )\qquad \text{in $[0,\min\{T, T_u\})$}.
 \end{align*}
 Next we employ the well-known interpolation inequality 
\begin{equation}\label{interpolineq}
      \| u \|_{H^{r}}\leq  \| u \|_{H^{r_1}}^{(r_2-r)/(r_2-r_1)}  \| u \|_{H^{r_2}}^{(r-r_1)/(r_2-r_1)}
\end{equation}
for $r=k-1, r_1=s$ and  $r_2=k$ and obtain that $\|u\|_{H ^{k-1}}^2\leq \|u\|_{H^{s}} \|u\|_{H^k} $ for all $u\in H ^k(\s)$. 
Recalling that   $ H^{k-1}(\s)\hookrightarrow{C^2}(\s),$   we arrive at
\begin{align*}
   \frac{d}{dt}\| u\|_{H^k}^2\leq C  \|u\|_{H^k}^2 \qquad \text{in $[0,\min\{T, T_u\})$}
 \end{align*}
which we may integrate with respect to time to obtain 
\begin{align}\label{ESTa}
   \| u\|_{H^k}\leq e^{C\,T_u} \| u(0)\|_{H^k}\qquad
    \text{in $[0,\min\{T, T_u\})$}.
 \end{align}
This inequality shows that $T>T_u$ for $\omega\in \{-1,1\}$ and for all $n\geq n_0.$ 
Indeed, assuming to the contrary that $T<T_u$, then $￼￼\| u(t)\|_{H^k}\rightarrow \infty$ as $t$ 
approaches the maximal existence time $T$  of $u \in H^k$. 
This is a contradiction to the fact that $u$ is bounded in $H^k$ in view of  \eqref{ESTa}. 
  Finally,  the error estimate \eqref{DE_2} is a simple consequence of \eqref{ESTa} and of the estimate
\[
    \|u(0)\|_{H^k} =\|u^{\omega,n}(0)\|_{H^k} \leq Cn ^{k-s} 
\]
for all $n\geq n_0$, cf. \eqref{approxEST}.
\end{proof}

It turns out that estimate \eqref{DE_2} can be improved when we choose $k=1 $ and $s\geq 2$.
The argument  relies  on the regularity properties derived in the previous Lemma \ref{L:1}.

\begin{lemma}[Estimating the error $\|u^{\om,n}-u_{\om,n}\|_{H^1} $  ]\label{L:2}
Assume that  $s\geq 2.$ 
Then, for all $ n\geq n_0$ and $\omega\in \{-1,1\}$ we have that
 \begin{equation}\label{DE}
\max_{t\in[0,T_u]}\|u^{\om,n}(t)-u_{\om,n}(t)\|_{H^1}\leq C(T_u)n^{-s}.
 \end{equation}
\end{lemma}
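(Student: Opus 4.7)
The plan is to perform an $H^1$-energy estimate on $v:=u^{\om,n}-u_{\om,n}$. Subtracting \eqref{CP} from the identity $\p_t u^{\om,n}=\p_x u^{\om,n}+14u^{\om,n}\p_x u^{\om,n}+\p_x\Lambda^{-2}R(u^{\om,n})+E$ obtained from \eqref{Error}, and using $u^{\om,n}u^{\om,n}_x-u_{\om,n}u_{\om,n,x}=u^{\om,n}v_x+u_{\om,n,x}v$, we find that $v$ satisfies
\begin{equation*}
v_t=(1+14u^{\om,n})v_x+14u_{\om,n,x}v+\p_x\Lambda^{-2}(R(u^{\om,n})-R(u_{\om,n}))+E,\qquad v(0)=0.
\end{equation*}
The extra regularity of $u_{\om,n}$ established in Lemma \ref{L:1} together with the smoothness of $u^{\om,n}$ ensures that all formal computations below are legitimate; if desired one works with a Friedrichs-mollified version of $v$ as in the proof of Theorem \ref{T:1} and passes to the limit.

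The energy identity $\tfrac12\tfrac{d}{dt}\|v\|_{H^1}^2=\int_{\s}\Lambda v\,\Lambda v_t\,dx$ is then controlled term by term. The free transport piece $\int\Lambda v\,\Lambda v_x\,dx$ vanishes by integration by parts. For the quasilinear transport term, split $\Lambda(u^{\om,n}v_x)=u^{\om,n}\Lambda v_x+[\Lambda,u^{\om,n}]v_x$; the principal summand produces $-7\int u^{\om,n}_x(\Lambda v)^2\,dx$, bounded by $C\|u^{\om,n}\|_{C^1}\|v\|_{H^1}^2\leq C\|v\|_{H^1}^2$ since, in view of \eqref{approxEST} and Sobolev embedding, $\|u^{\om,n}\|_{C^1}\leq C\|u^{\om,n}\|_{H^s}\leq C$. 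To handle the commutator, use the identity $[\Lambda,u^{\om,n}]v_x=[\Lambda\p_x,u^{\om,n}]v-\Lambda(u^{\om,n}_xv)$; the first summand is estimated by \eqref{CCM} with $\sigma=1$ (applicable precisely because $s\geq 2$), and the second by \eqref{algebra} with $t=1$, $r=s-1\geq 1$, both giving an $O(\|v\|_{H^1})$ bound on the $L_2$-norm.

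The lower-order term $14u_{\om,n,x}v$ is estimated directly from \eqref{algebra} together with the uniform bound \eqref{UE}, yielding $\|u_{\om,n,x}v\|_{H^1}\leq C\|u_{\om,n}\|_{H^s}\|v\|_{H^1}\leq C\|v\|_{H^1}$. For the nonlocal source one writes $R(u^{\om,n})-R(u_{\om,n})$ as a polynomial in $u^{\om,n}, u_{\om,n}, u^{\om,n}_x, u_{\om,n,x}$ which is linear in $v$ and $v_x$; iterated application of \eqref{algebra} using the uniform $H^s$-bounds on both solutions yields $\|R(u^{\om,n})-R(u_{\om,n})\|_{L_2}\leq C\|v\|_{H^1}$, and since $\p_x\Lambda^{-1}$ is bounded on $L_2$ this translates into $\|\p_x\Lambda^{-2}(R(u^{\om,n})-R(u_{\om,n}))\|_{H^1}\leq C\|v\|_{H^1}$. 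Finally, Lemma \ref{L:Error} applied with $\sigma=1$ and $s\geq 2$ gives $\|E\|_{H^1}\leq Cn^{-s}$, whence $\bigl|\int\Lambda v\,\Lambda E\,dx\bigr|\leq C\|v\|_{H^1}\,n^{-s}$.

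Assembling these bounds produces the differential inequality $\tfrac{d}{dt}\|v\|_{H^1}^2\leq C\|v\|_{H^1}^2+C\|v\|_{H^1}n^{-s}$, equivalently $\tfrac{d}{dt}\|v\|_{H^1}\leq C\|v\|_{H^1}+Cn^{-s}$; combined with $v(0)=0$, Gr\"onwall's lemma on $[0,T_u]$ yields \eqref{DE}. The main technical hurdle is controlling the transport commutator $[\Lambda,u^{\om,n}]v_x$ in $L_2$ by $\|v\|_{H^1}$; this is where the threshold $s\geq 2$ genuinely enters, as the Calderon--Coifman--Meyer estimate \eqref{CCM} demands $1+\sigma\leq s$ and we are forced to take $\sigma=1$.
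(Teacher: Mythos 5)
Your proof is correct, but it follows a genuinely different route from the paper's. You linearize the convective term as $u^{\om,n}u^{\om,n}_x-u_{\om,n}\p_xu_{\om,n}=u^{\om,n}v_x+(\p_xu_{\om,n})v$, so that your equation for $v$ (and likewise $R(u^{\om,n})-R(u_{\om,n})$) is exactly \emph{linear} in $v,v_x$ with coefficients bounded uniformly in $H^s$; you then run the $H^1$ estimate by pairing $\Lambda v$ with $\Lambda v_t$ and control the transport term via the splitting $\Lambda(u^{\om,n}v_x)=u^{\om,n}\Lambda v_x+[\Lambda,u^{\om,n}]v_x$ together with the Calderon--Coifman--Meyer bound \eqref{CCM} at $\sigma=1$ and the multiplier inequality \eqref{algebra}. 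This is essentially the strategy the paper reserves for Lemma \ref{L:3}, pushed to $\sigma=1$ (admissible precisely when $s\geq2$, as you note). The paper's own proof of Lemma \ref{L:2} instead keeps all coefficients in terms of $u^{\om,n}$ alone, which introduces a genuinely quadratic term $-14vv_x$ and nonlinear terms up to $v^4$ and $v_x^2$ in $F$; it then avoids commutator estimates entirely by pairing $v$ with $\Lambda^2v_t$, expanding $\Lambda^2=1-\p_x^2$ as a differential operator, and exploiting exact cancellations under integration by parts (the $7\int_\s v_x^3\,dx$ contributions from $-14\int_\s v\Lambda^2(vv_x)\,dx$ and from the $-7v_x^2$ term of $F$ cancel, and the pure powers of $v$ integrate to zero against $v_x$). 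What each buys: the paper's argument is more elementary and self-contained but relies on structural cancellations specific to this nonlinearity; your argument is more systematic and robust (no cancellations needed, since nothing worse than linear in $v$ ever appears), at the cost of invoking \eqref{CCM}, which is exactly where the hypothesis $s\geq2$ enters for you. Both yield the same differential inequality $\frac{d}{dt}\|v\|_{H^1}\leq C(\|v\|_{H^1}+n^{-s})$ and hence \eqref{DE} by Gronwall.
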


\begin{proof}
Denoting the difference between the approximate solution and the exact solution by  $v:=u^{\om,n}-u_{\om,n}$, 
we see that $v$ is a  solution of the initial value problem
\begin{equation}\label{equ}
 \left\{
     \begin{array}{rlll}
           v_t & = v_x-14vv_x+14 u^{\om,n}v_x+14 u^{\om,n}_xv+E+\p_x\Lambda ^{-2}(F)\qquad \text{for $t>0,$}\\
           v(0)&= 0,
     \end{array}
 \right.
\end{equation}
whereby $E$ is the error term defined by \eqref{Error} and
\begin{align*}
  F:=&\,R(u^{\om,n}) - R(u_{\om,n}) \\
     =&\, 14u^{\om,n}_xv_x-7v_x^2-2v-20 u^{\om,n}v+10 v^2+6(u^{\om,n})^2v
           -6u^{\om,n}v^2+2 v^3\\
       & -12 (u^{\om,n})^3v+18 (u^{\om,n})^2v^2-12 u^{\om,n} v^3+3 v^4.
\end{align*}
In view of the regularity property derived for $u_{\om,n}$   in Lemma \ref{L:1}, we may apply $\Lambda^2$ on both sides of  \eqref{equ} and find that
\begin{equation*} 
  \Lambda ^{2}v_t=\Lambda ^{2}v_x-14\Lambda ^{2}(vv_x)+14 \Lambda ^{2}(u^{\om,n}v_x)+14 \Lambda ^{2}(u^{\om,n}_xv)+\Lambda ^{2} E+\p_xF,
 \end{equation*}
 and therewith
 \begin{align*}
  \frac {1}{2}\frac{d}{dt}\|v \|_{H^1}^2=&\int_\s v\Lambda^2 v_t\, dx\\
  =&  \int_\s v\Lambda ^{2}v_x \, dx
  -14 \int_\s v \Lambda ^{2}(vv_x)\, dx
   +14\int_\s v \Lambda ^{2}(u^{\om,n}v_x)\, dx\\
  &+14\int_\s v \Lambda ^{2}(u^{\om,n}_xv)\, dx
  +\int_\s v \Lambda ^{2} E\, dx
  +\int_\s v \p_xF\, dx
 \end{align*}
 for all $t\in[0,T_u].$
Taking into account that 
\begin{align*}
&\int_\s v\Lambda ^{2}v_x \, dx=\int_\s v v_x+v_x v_{xx}\, dx=0
\end{align*}
and noting that
\begin{align*}
&14 \int_\s v \Lambda ^{2}(vv_x)\, dx=14 \int_\s v^2v_x\, dx+7 \int_\s v_x^3\, dx=7 \int_\s v_x^3\, dx
 \end{align*}
 we find 
  \begin{align*}
 \frac{d}{dt}\|v \|_{H^1}^2 
  =&-7 \int_\s v_x^3\, dx+14\int_\s v \Lambda ^{2}(u^{\om,n}v_x)\, dx+14\int_\s v \Lambda ^{2}(u^{\om,n}_xv)\, dx
  +\int_\s v \Lambda ^{2} E\, dx\\
  &-\int_\s v_x \left(14u^{\om,n}_xv_x-7v_x^2-2v-20 u^{\om,n}v+10 v^2+6(u^{\om,n})^2v-6u^{\om,n}v^2\right)\, dx\\
   &-\int_\s v_x \left(2 v^3-12 (u^{\om,n})^3v+18 (u^{\om,n})^2v^2-12 u^{\om,n} v^3+3 v^4\right)\, dx\\
   =&\,14\int_\s v \Lambda ^{2}(u^{\om,n}v_x)\, dx+14\int_\s v \Lambda ^{2}(u^{\om,n}_xv)\, dx
  +\int_\s v \Lambda ^{2} E\, dx\\
  &-\int_\s v_x \left(14u^{\om,n}_xv_x -20 u^{\om,n}v +6(u^{\om,n})^2v-6u^{\om,n}v^2 \right)\, dx\\
   &-\int_\s v_x \left(-12 (u^{\om,n})^3v+18 (u^{\om,n})^2v^2-12 u^{\om,n} v^3 \right)\, dx.
 \end{align*}
 This leads us to the following inequality
 \begin{align*}
 \frac{d}{dt}\|v(t)\|_{H^1}^2 \leq & C\left(\|u^{\om,n}_x\|_{W^1_\infty}\|v\|_{H^1}^2
     +\|E\|_{H^1}\|v\|_{H^1} +
    (1+\|u^{\om,n}\|_{L_\infty})^2\|u^{\om,n}_x\|_{L_\infty}\|v\|_{H^1}^2\right.\\
    &\hspace{0.5cm}\left.
    + \|u^{\om,n}\|_{L_\infty} \|u^{\om,n}_x\|_{L_\infty}\|v\|_{H^1}^3+\|u^{\om,n}_x\|_{L_\infty}\|v\|_{H^1}^4\right).
\end{align*}
Observing that the relation \eqref{approxEST} implies $\sup_{[0,T_u]}\|u^{\omega,n}(t)\|_{H^2}\leq C$,  we find together with \eqref{UE} that
  \begin{align*}
 \frac{d}{dt}\|v \|_{H^1}^2 \leq & C\left(\|u^{\om,n}_x\|_{W^1_\infty}\|v\|_{H^1}^2+\|E\|_{H^1}\|v\|_{H^1} +
\|u^{\om,n}_x\|_{L_\infty}\|v\|_{H^1}^2\right).
 \end{align*}
 Taking now into account the estimates
 \begin{align*}
  \|u^{\om,n}_x\|_{L_\infty}\leq C n^{1-s}\qquad\text{and}\qquad \|u^{\om,n}_x\|_{W^1_\infty}\leq C n^{2-s}
 \end{align*}
for $ n\geq n_0$ and $\omega\in \{-1,1\}$,  we obtain in view of the error estimate \eqref{E:1} in Lemma \ref{L:Error} that 
   \begin{align*}
 \frac{d}{dt}\|v \|_{H^1}^2 \leq & C\left( \|v\|_{H^1}^2+n^{-s}\|v\|_{H^1}\right).
 \end{align*}
The latter estimate leads us to
    \begin{align*}
 \frac{d}{dt}\|v \|_{H^1}  \leq & C\left( \|v\|_{H^1} +n^{-s} \right) \qquad\text{in $[0,T_u]$},
 \end{align*}
  the desired estimate \eqref{DE} following in view of Gronwall's inequality and   since    $v(0)=0$.
\end{proof}

Before proving the main result, we  show the analog of Lemma \ref{L:2} in the situation when $3/2<s<2.$ 
The regularity properties derived in Lemma \ref{L:1} are once again essential.

 \begin{lemma}[Estimating the error $\|u^{\om,n}-u_{\om,n}\|_{H^\sigma} $  ]\label{L:3}
    Let $3/2<s< 2.$ 
    For all $ n\geq n_0$, $\omega\in \{-1,1\}$ and $1/2<\sigma\leq s-1$ we have that   
     \begin{equation}\label{DEmm}
        \max_{t\in[0,T_u]}\|u^{\om,n}(t)-u_{\om,n}(t)\|_{H^\sigma}\leq C(T_u)n^{-s}.
     \end{equation}
 \end{lemma}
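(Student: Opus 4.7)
Setting $v:=u^{\om,n}-u_{\om,n}$, Lemma~\ref{L:1} ensures that $v\in C([0,T_u],H^{s+3})\cap C^1([0,T_u],H^{s+2})$, so the equation \eqref{equ} can be differentiated in space and tested classically on $[0,T_u]$.  The plan is to apply $\Lambda^\sigma$ to \eqref{equ}, pair with $\Lambda^\sigma v$ in $L_2$, and establish the differential inequality
\begin{equation*}
  \frac{d}{dt}\|v\|_{H^\sigma}\leq C\|v\|_{H^\sigma}+Cn^{-s}\qquad\text{on $[0,T_u]$,}
\end{equation*}
from which \eqref{DEmm} follows at once from Gronwall's lemma together with $v(0)=0$.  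Throughout I will exploit the uniform bounds $\|u^{\om,n}\|_{H^s}\leq C$, $\|u_{\om,n}\|_{H^s}\leq C$, and hence $\|v\|_{H^s}\leq C$, supplied by \eqref{approxEST} and \eqref{UE}, together with the Sobolev embedding $H^s\hookrightarrow W^1_\infty$.

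The six summands arising from the right-hand side of \eqref{equ} are handled in the spirit of Lemma~\ref{L:2}, but now each commutator is estimated through the Calderon-Coifman-Meyer inequality \eqref{CCM}, whose range of applicability $1+\sigma\in[0,s]$ matches the hypothesis $\sigma\leq s-1$.  The $v_x$-term vanishes by antisymmetry.  For the Burgers nonlinearity I would decompose $\Lambda^\sigma(vv_x)=\frac{1}{2}[\Lambda^\sigma\partial_x,v]v+\frac{1}{2}v\,\partial_x\Lambda^\sigma v$: the commutator is bounded via \eqref{CCM} by $C\|v\|_{H^s}\|v\|_{H^\sigma}\leq C\|v\|_{H^\sigma}$, and the remaining piece is handled by integration by parts together with $\|v_x\|_{L_\infty}\leq C$.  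The linear terms $14u^{\om,n}v_x$ and $14u^{\om,n}_xv$ should be combined into $14\partial_x(u^{\om,n}v)$ and treated analogously, via $\Lambda^\sigma\partial_x(u^{\om,n}v)=u^{\om,n}\partial_x\Lambda^\sigma v+[\Lambda^\sigma\partial_x,u^{\om,n}]v$, using $\|u^{\om,n}_x\|_{L_\infty}\leq Cn^{1-s}$ and again \eqref{CCM}.  The error contribution is controlled by $\|v\|_{H^\sigma}\|E\|_{H^\sigma}$, and Lemma~\ref{L:Error} combined with $\sigma\leq s-1$ gives $\|E\|_{H^\sigma}\leq Cn^{-2s+1+\sigma}\leq Cn^{-s}$.

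The remaining and most delicate summand is $\int_\s\Lambda^\sigma v\,\Lambda^\sigma\partial_x\Lambda^{-2}F\,dx$, which by Cauchy-Schwarz and the $H^1$-smoothing of $\partial_x\Lambda^{-2}$ is dominated by $\|v\|_{H^\sigma}\|F\|_{H^{\sigma-1}}$.  I would expand $F$ into its linear, quadratic, cubic and quartic contributions in $(v,v_x)$ multiplied by powers of $u^{\om,n}$, and estimate each summand in $H^{\sigma-1}$ by means of the multiplier inequality \eqref{algebra} (which requires $\sigma>1/2$).  Only one summand, $u^{\om,n}_xv_x$, carries two derivatives; this should first be rewritten as $\partial_x(u^{\om,n}_xv)-u^{\om,n}_{xx}v$ before applying \eqref{algebra}.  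The main obstacle is precisely to carry out this expansion while keeping every factor that multiplies $\|v\|_{H^\sigma}$ uniformly bounded in $n$: this is exactly what the hypothesis $\sigma\leq s-1$ achieves, since it yields $\|u^{\om,n}_x\|_{H^\sigma}\leq Cn^{1-s+\sigma}\leq C$ and the analogous bound $\|u^{\om,n}_{xx}\|_{H^{\sigma-1}}\leq C$.  Once each term in $F$ has been absorbed into $C\|v\|_{H^\sigma}$, the differential inequality above follows and Gronwall's lemma then delivers \eqref{DEmm}.
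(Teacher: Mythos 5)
Your proposal is correct and rests on the same analytic pillars as the paper's proof: an $H^\sigma$ energy estimate, the Calderon--Coifman--Meyer commutator bound \eqref{CCM} (whose admissibility condition $1+\sigma\in[0,s]$ is exactly what the hypothesis $\sigma\leq s-1$ guarantees), the multiplier inequality \eqref{algebra} for the nonlocal remainder, and Gronwall together with $\|E\|_{H^\sigma}\leq Cn^{-2s+1+\sigma}\leq Cn^{-s}$. The difference lies in the algebraic organization. You keep the non-symmetric form \eqref{equ} inherited from Lemma~\ref{L:2}, so you must separately tame the genuinely nonlinear terms in $v$: the Burgers term $vv_x$ via a second application of \eqref{CCM} to $[\Lambda^\sigma\partial_x,v]v$ combined with the uniform bound $\|v\|_{H^s}\leq C$ from \eqref{approxEST} and \eqref{UE}, and the quadratic-through-quartic pieces of $F$ again via $\|v\|_{H^s}\leq C$ inside \eqref{algebra}. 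The paper instead symmetrizes from the outset, rewriting the quadratic nonlinearity as $7\partial_x\bigl((u^{\om,n}+u_{\om,n})v\bigr)$ and factoring $G=R(u^{\om,n})-R(u_{\om,n})$ so that every term is \emph{linear} in $v$ with coefficients that are symmetric polynomials in $u^{\om,n}$ and $u_{\om,n}$, all uniformly bounded in $H^s$; this yields $\|G\|_{H^{\sigma-1}}\leq C\|v\|_{H^\sigma}$ and the transport estimate in one pass, with a single commutator. Your route costs a few extra (but valid) estimates; the paper's buys a cleaner bookkeeping. Both close the same differential inequality, so the proof is sound.
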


\begin{proof}
 In this case, we interpret    the function $v=u^{\om,n}-u_{\om,n}$  as a solution of the initial value problem
\begin{equation}\label{EEE}
 \left\{
     \begin{array}{rlll}
           v_t & =v_x+7((u^{\om,n} +u_{\om,n})v)_x+E+\p_x\Lambda ^{-2} G \qquad\text{for $t>0$},\\
           v(0)&= 0,
     \end{array}
 \right.
\end{equation}
with $E$ given by \eqref{Error}  and with   $G:=\,R(u^{\om,n}) - R(u_{\om,n})$.
It is useful to bring $G$ in the following form 
\begin{align*}
   G =& \,7(u^{\om,n}+u_{\om,n})_xv_x-3v\big( (u^{\om,n})^3+(u^{\om,n})^2u_{\om,n}
            +u^{\om,n}(u_{\om,n})^2+(u_{\om,n})^3)\\
         & +2v\big ((u^{\om,n})^2+u^{\om,n} u_{\om,n}+(u_{\om,n})^2\big)
            -10v\big(u^{\om,n}+ u_{\om,n}\big)-2v.
 \end{align*}
In view of \eqref{EEE}, we have
\begin{align*}
  \frac {1}{2}\frac{d}{dt}\|v\|_{H^\sigma}^2
  =   \int_\s \Lambda^\sigma v\Lambda^\sigma v_t\, dx
  = &\int_\s \Lambda^\sigma v\Lambda^\sigma v_x \, dx
    + \int_\s \Lambda^\sigma v\Lambda^\sigma E\, dx\\
     &+7\int_\s \Lambda^\sigma v\Lambda^\sigma ((u^{\om,n} +u_{\om,n})v)_x \, dx
       +\int_\s \Lambda^\sigma v\Lambda^\sigma \p_x\Lambda ^{-2} G\, dx.
 \end{align*}
The first term in the previous equation vanishes
 \begin{align*}
     \int_\s \Lambda^\sigma v\Lambda^\sigma v_x \, dx=0,
 \end{align*}
while  applying the Cauchy-Schwarz inequality for the second and fourth term we obtain the estimates
 \begin{align*}
    \|\Lambda^\sigma v\Lambda^\sigma E\|_{L_1}
        & \leq \|v\|_{H^\sigma}\|E\|_{H^\sigma}\\
    \|\Lambda^\sigma v\Lambda^\sigma \p_x\Lambda ^{-2} G\|_{L_1}
        & \leq \|v\|_{H^\sigma}\|G\|_{ H^{\sigma-1}}.
 \end{align*}
To derive a bound for the third term,  we use the Calderon-Coifman-Meyer type estimate \eqref{CCM}.
We first commute the operator $\Lambda^{\sigma}\partial_x$ with the function $u^{\om,n} +u_{\om,n}$ and obtain
\begin{align*}
    \int_\s \Lambda^\sigma v\Lambda^\sigma ((u^{\om,n} +u_{\om,n})v)_x\, dx 
    =& \int_\s \Lambda^\sigma v  (u^{\om,n} +u_{\om,n}) \Lambda^\sigma \partial_x v \, dx \\
    & +\int_\s \Lambda^\sigma v [\Lambda^\sigma \partial_x, (u^{\om,n} +u_{\om,n})]v \, dx.\end{align*}
After integrating by parts, we estimate the first integral as follows
\begin{equation*}
   \Big|\int_\s \Lambda^\sigma v  (u^{\om,n} +u_{\om,n}) \Lambda^\sigma \partial_x v \, dx\Big|
      \leq \|\partial_x(u^{\om,n} +u_{\om,n}) \|_{L^\infty} \|v\|_{H^\sigma}^2.
\end{equation*}
To estimate the second integral, we apply the Cauchy-Schwarz inequality and then use the estimate \eqref{CCM} to find
\begin{align*}
 \Big|\int_\s \Lambda^\sigma v [\Lambda^\sigma \partial_x, (u^{\om,n} +u_{\om,n})]v\,dx\Big|
    \leq & \| \Lambda^\sigma v\|_{L^2} \|[\Lambda^\sigma\p_x,u^{\om,n} +u_{\om,n}]v\|_{L_2} \\
    \leq & C\| u^{\om,n} +u_{\om,n}\|_{H^s}\| v\|_{H^\sigma}^2.
\end{align*}
In view of the boundedness of the
family $\{\max_{[0,T_u]}\|u^{\om,n} +u_{\om,n}\|_{H^s}\,:\, n\geq n_0, \, \omega=\pm1\}$ we may combine the preceding estimates and obtain that
 \begin{align*}
     \|\Lambda^\sigma v\Lambda^\sigma ((u^{\om,n} +u_{\om,n})v)_x\|_{L_1}
         \leq C \|v\|_{H^\sigma}^2.
 \end{align*}
The latter argument and the multiplier inequality \eqref{algebra} show that 
\[
\| G\|_{ H^{\sigma-1}}\leq C\|v\|_{H^\sigma},
\]
and together with the error bound \eqref{E:1} obtained in Lemma \ref{L:Error} we conclude that 
\begin{align*}
 \frac{d}{dt}\|v \|_{H^\sigma}^2\leq C\big(\|v\|_{H^\sigma}^2+n^{-2s+1+\sigma}\|v\|_{H^\sigma}\big).
 \end{align*}
 Whence,
 \begin{align*}
 \frac{d}{dt}\|v \|_{H^\sigma} \leq C\big(\|v\|_{H^\sigma} +n^{-2s+1+\sigma} \big) \qquad\text{in $[0,T_u]$},
 \end{align*}
 and the conclusion follows, as in Lemma \ref{L:2}, by taking into account that $-2s+1+\sigma\leq -s$ for all $ \sigma\in(1/2, s-1].$
 \end{proof}

 
 \section{Proof of the main result} 
   In the remaining part we prove that the functions $u_n:=u_{1,n+n_0}$ and $\wt u_n:=u_{-1,n+n_0}$, $n\in\N,$ satisfy all the properties required in Theorem \ref{MT}.  
Recalling the estimate \eqref{UE}, which ensures that the strong solutions $u_{\pm 1,n}$, $n\geq n_0,$ are bounded in $H^s$, proves the first claim
\begin{equation*}
    \sup_{n\geq n_0}\max_{t\in[0,T_u]}\|u_{ 1, n}(t)\|_{H^s}+\| u_{-1,n}(t)\|_{H^s}\leq C,
\end{equation*}
where $T_u$ is the constant introduced right before Lemma \ref{L:1}.
The second assertion follows   by taking into account the definition of the approximate  solutions \eqref{approx}, which yields
\[
     \|u_{ 1,n}(0)-u_{- 1,n}(0)\|_{H^s}=\frac{2n^{-1}}{14}\|1\|_{H^s}\to_{n\to\infty}0.
 \]
To show that the third claim of Theorem \ref{MT} holds, we have to derive a decay estimate for the difference between the two unknown exact solutions. 
The trick is to work with inequalities involving the estimates for the absolute errors   deduced in the preceding lemmas. 
We assume first that $s\geq 2$, and observe that 
\begin{align}
  \|u_{1,n}(t)-u_{- 1,n}(t)\|_{H^s} \geq& \|u^{1,n}(t)-u^{-1,n}(t)\|_{H^s} \notag\\ 
                                                                & -\|u^{1,n}(t)-u_{1,n}(t)\|_{H^s} -\|u^{-1,n}(t)-u_{- 1,n}(t)\|_{H^s}\label{DE-1}
\end{align}
for all $t\in[0,T_u] $ and $n\geq n_0.$ 
Now we find lower bounds for each of these three terms. 
A simple calculation yields that
\begin{align}\label{DE0}
     \|u^{1,n}(t)-u^{-1,n}(t)\|_{H^s} 
           = & \frac{1}{14}\|2n^{-1}-n^{-s}\left(\cos(nx+t)-\cos(nx-t)\right)\|_{H^s}\nonumber\\
      \geq & \frac{ n^{-s}}{7}|\sin(t)|\,\|\sin(nx)\|_{H^s}-\frac{ \sqrt{2\pi}n^{-1}}{7}\nonumber\\
      \geq & \frac{\sqrt{\pi}|\sin(t)| }{7}  -\frac{ \sqrt{2\pi}n^{-1}}{7}
\end{align}
for all $n\geq n_0$ and $t\in[0,T_u]$, where we have used \eqref{E} in the last inequality.
To estimate the second and third term in \eqref{DE-1}, we apply the interpolation inequality \eqref{interpolineq} with $r=s$, $r_1=1$ and $r_2=k$, and find  
\begin{align}\label{DE4}
    \|u^{\pm 1,n}(t)-u_{\pm 1,n}(t)\|_{H^s}
        \leq & \|u^{\pm 1,n}(t)-u_{\pm 1,n}(t)\|_{H^1}^{\frac{2}{k-1}}
                   \|u^{\pm 1,n}(t)-u_{\pm 1,n}(t)\|_{H^k}^{\frac{s-1}{k-1}} \nonumber \\
        \leq  & C(T_u) \,n ^{\frac{-2}{k-1}}
\end{align}
for all $n\geq n_0$, in view of the estimates \eqref{DE} and \eqref{DEmm} obtained in Lemma \ref{L:1} and \ref{L:2}, respectively. 
Gathering \eqref{DE0} and \eqref{DE4}, we obtain that
\begin{equation*}
    \|u_{1,n}(t)-u_{-1,n}(t)\|_{H^s} 
        \geq  \frac{\sqrt{\pi}|\sin(t)| }{7}  -\frac{ \sqrt{2\pi}n^{-1}}{7}-C(T_u) n ^{\frac{-2}{k-1}},
\end{equation*}
for all $t\in[0,T_u]$ and  $n\geq n_0$. 
Finally, we let $n\rightarrow \infty$ to  complete the proof in the case $s\geq 2$. 

For $3/2<s<2$ we note that the estimate \eqref{DE0} is still valid, whereas the analog of \eqref{DE4} holds in view of  Lemma \ref{L:3}. Indeed, we find that 
 \begin{align*}
      \|u^{\pm 1,n}(t)-u_{\pm 1,n}(t)\|_{H^s}
             \leq & \|u^{\pm 1,n}(t)-u_{\pm 1,n}(t)\|_{H^\sigma}^{\frac{2}{k-\sigma}}
                        \|u^{\pm 1,n}(t)-u_{\pm 1,n}(t)\|_{H^k}^{\frac{s-\sigma}{k-\sigma}} \\
             \leq & C(T_u)\, n ^{\frac{-2\sigma}{k-\sigma}},
\end{align*}
 where $\sigma\in(1/2,s-1]$ is fixed and $n\geq n_0$.
The final argument of the proof is analogous to the one presented in the case when $s\geq2$.
\qed

\subsection*{Acknowledgements}
A. Geyer was supported by the FWF project J 3452 ''Dynamical Systems Methods in Hydrodynamics`` of the Austrian Science Fund.


\begin{thebibliography}{10}

\bibitem{MKST09}
A.~{Boutet de Monvel}, A.~Kostenko, D.~Shepelsky, and G.~Teschl.
\newblock {Long-time asymptotics for the {C}amassa-{H}olm equation}.
\newblock {\em SIAM J. Math. Anal.}, 41(4):1559--1588, 2009.

\bibitem{BC07}
A.~Bressan and A.~Constantin.
\newblock {Global conservative solutions of the {C}amassa-{H}olm equation}.
\newblock {\em Arch. Ration. Mech. Anal.}, 183(2):215--239, 2007.

\bibitem{CH93}
R.~Camassa and D.~D. Holm.
\newblock {An integrable shallow water equation with peaked solitons}.
\newblock {\em Phys. Rev. Lett.}, 71(11):1661--1664, 1993.

\bibitem{Cons01}
A.~Constantin.
\newblock {On the scattering problem for the {C}amassa-{H}olm equation}.
\newblock {\em R. Soc. Lond. Proc. Ser. A Math. Phys. Eng. Sci.},
  457(2008):953--970, 2001.

\bibitem{Con11}
A.~Constantin.
\newblock {\em {Nonlinear Water Waves with Applications to Wave-Current
  Interactions and Tsunamis}}, volume~81 of {\em {CBMS-NSF Conference Series in
  Applied Mathematics}}.
\newblock SIAM, Philadelphia, 2011.

\bibitem{CE98}
A.~Constantin and J.~Escher.
\newblock {Well-posedness, global existence, and blowup phenomena for a
  periodic quasi-linear hyperbolic equation}.
\newblock {\em Comm. Pure Appl. Math.}, 51(5):475--504, 1998.

\bibitem{CGI06}
A.~Constantin, V.~S. Gerdjikov, and R.~I. Ivanov.
\newblock {Inverse scattering transform for the {C}amassa-{H}olm equation}.
\newblock {\em Inverse Problems}, 22(6):2197--2207, 2006.

\bibitem{CL09}
A.~Constantin and D.~Lannes.
\newblock {The hydrodynamical relevance of the {C}amassa-{H}olm and
  {D}egasperis-{P}rocesi equations}.
\newblock {\em Arch. Ration. Mech. Anal.}, 192(1):165--186, 2009.

\bibitem{DG13}
N.~{Duruk Mutluba\c s} and A.~Geyer.
\newblock {Orbital stability of solitary waves of moderate amplitude in shallow
  water}.
\newblock {\em J. Differential Equations}, 255(2):254--263, 2013.

\bibitem{DM13}
N.~{Duruk Mutluba\c{s}}.
\newblock {On the {C}auchy problem for a model equation for shallow water waves
  of moderate amplitude}.
\newblock {\em Nonlinear Anal. Real World Appl.}, 14(5):2022--2026, 2013.

\bibitem{DMxx}
N.~{Duruk Mutluba\c{s}}.
\newblock {Local well-posedness and wave breaking results for periodic
  solutions of a shallow water equation for waves of moderate amplitude}.
\newblock {\em Nonlinear Anal.}, 97:145--154, 2014.

\bibitem{BExx}
J.~Escher and B.~Kolev.
\newblock {Geodesic completness for {S}obolev $H^s$-metrics on the
  diffeomorphisms group of the circle}.
\newblock 2013.
\newblock arXiv:1308.357v1.

\bibitem{FL13}
Y.~Fu and Z.~Liu.
\newblock {Non-uniform dependence on initial data for the periodic modified
  {C}amassa-{H}olm equation}.
\newblock {\em NoDEA Nonlinear Differential Equations Appl.}, 20:741--755,
  2013.

\bibitem{G12}
A.~Geyer.
\newblock {Solitary traveling waves of moderate amplitude}.
\newblock {\em J. Nonlinear Math. Phys.}, 19(supp01):12p, 2012.

\bibitem{GSS87}
M.~Grillakis, J.~Shatah, and W.~Strauss.
\newblock {Stability theory of solitary waves in the presence of symmetry}.
\newblock {\em J. Funct. Anal.}, 74:160--197, 1987.

\bibitem{HK09}
A.~A. Himonas and C.~Kenig.
\newblock {Non-uniform dependence on initial data for the {CH} equation on the
  line}.
\newblock {\em Differential Integral Equations}, 22:201--224, 2009.

\bibitem{HKM10}
A.~A. Himonas, C.~Kenig, and G.~Misiolek.
\newblock {Non-uniform dependence for the periodic {CH} equation}.
\newblock {\em Commun. Partial Differential Equations}, 35:1145--1162, 2010.

\bibitem{HM10}
A.~A. Himonas and G.~Misiolek.
\newblock {Non-uniform dependence on initial data of solutions to the {E}uler
  {E}quations of hydrodynamics}.
\newblock {\em Comm. Math. Phys}, 296(1):285--301, 2010.

\bibitem{Joh02}
R.~S. Johnson.
\newblock {Camassa-{H}olm, {K}orteweg-de {V}ries and related models for water
  waves}.
\newblock {\em J. Fluid Mech.}, 455:63--82, 2002.

\bibitem{Ka10}
D.~Karapetyan.
\newblock {Non-uniform dependence and well-posedness for the hyperelastic rod
  equation}.
\newblock {\em J. Differential Equations}, 249:796--826, 2010.

\bibitem{K75}
T.~Kato.
\newblock {Quasi-linear equations of evolution, with applications to partial
  differential equations}.
\newblock In {\em {Spectral theory and differential equations ({P}roc.
  {S}ympos., {D}undee, 1974; dedicated to {K}onrad {J}{\"o}rgens)}}, pages
  25--70. Lecture Notes in Math., Vol. 448. Springer, Berlin, 1975.

\bibitem{KP88}
T.~Kato and G.~Ponce.
\newblock {Commutator estimates and the euler and {N}avier-{S}tokes equations}.
\newblock {\em Comm. Pure Appl. Math.}, 41(7):891--907, 1988.

\bibitem{LPW13}
G.~Lv, P.~Y.~H. Pang, and M.~Wang.
\newblock {Non-uniform dependence on initial data for the $\mu-b$ equation}.
\newblock {\em Z. Angew. Math. Phys}, 64(5):1543--1554, 2013.

\bibitem{WL12}
G.~Lv and M.~Wang.
\newblock {Non-uniform dependence for a modified {C}amassa-{H}olm system}.
\newblock {\em J. Math. Physics}, 53:013101, 2012.

\bibitem{MK98}
H.~P. McKean.
\newblock {Breakdown of a shallow water equation}.
\newblock {\em Asian J. Math.}, 2(4):867--874, 1998.

\bibitem{MiM13}
Y.~Mi and C.~Mu.
\newblock {On the solutions of a model equation for shallow water waves of
  moderate amplitude}.
\newblock {\em J. Differential Equations}, 255:2101--2129, 2013.

\bibitem{Mi02}
G.~Misiolek.
\newblock {Classical Solutions of the periodic {C}amassa-{H}olm equation}.
\newblock {\em Geom. Funct. Anal.}, 12(5):1080--1104, 2002.

\bibitem{RS96}
T.~Runstl and W.~Sickel.
\newblock {\em {Sobolev Spaces of Fractional Order, Nemytskij Operators, and
  Nonlinear Partial Differential Equations}}, volume~35 of {\em {de Gruyter
  Series in Nonlinear Analysis and Applications}}.
\newblock Walter de Gruyter \& Co, Berlin, 1996.

\bibitem{Tay91}
M.~Taylor.
\newblock {\em {Pseudodifferential Operators and Nonlinear PDE}}.
\newblock Birkh{\"a}user, Boston, 1991.

\bibitem{Tay02}
M.~Taylor.
\newblock {Commutator estimates}.
\newblock {\em Proc. Amer. Math. Soc.}, 131:1501--1507, 2002.

\end{thebibliography}
\end{document}